\newtheorem{theo}{Theorem}[section]
\newtheorem{lemma}[theo]{Lemma}
\newtheorem{propo}[theo]{Proposition}
\newtheorem{coro}[theo]{Corollary}
\newtheorem{rem}[theo]{Remark}
\newtheorem{pb}[theo]{Problem}
\newtheorem{exam}[theo]{Example}
\newtheorem{exams}[theo]{Examples}
\newcommand\Inj{\operatorname{Inj}}
\newcommand\Ext{\operatorname{Ext}}
\newcommand\Mod{\operatorname{\bf Mod}}
\newcommand\id{\operatorname{id}}
\newcommand\Proj{\operatorname{Proj}}
\newcommand\Ban{\operatorname{\bf Ban}}
\newcommand\Mono{\operatorname{Mono}}
\newcommand\Epi{\operatorname{Epi}}
\newcommand\Ab{\operatorname{\bf Ab}}
\newcommand\Pos{\operatorname{\bf Pos}}
\newcommand\Hom{\operatorname{Hom}}
\newcommand\cc{\mathcal {C}}
\newcommand\cd{\mathcal {D}}
\newcommand\cf{\mathcal {F}}
\newcommand\cg{\mathcal {G}}
\newcommand\ci{\mathcal {I}}
\newcommand\ck{\mathcal {K}}
\newcommand\cl{\mathcal {L}}
\newcommand\crr{\mathcal {R}}
\newcommand\cs{\mathcal {S}}
\newcommand\cp{\mathcal {P}}
\newcommand\cx{\mathcal {X}}
\newcommand\cy{\mathcal {Y}}
\newcommand\cz{\mathcal {Z}}
 \newbox\noforkbox \newdimen\forklinewidth
\noforkbox\hbox{\lower 2pt\box1\lower
2pt\box0\relax}
\date{May 23, 2022}
\begin{document}
\title[Fibrantly generated weak factorization systems]
{Fibrantly generated weak factorization systems}
\author[S. Cox and J. Rosick\'{y}]
{S. Cox and J. Rosick\'{y}}
\thanks{The second author is supported by the Grant Agency of the Czech Republic under the grant 22-02964S} 

\address{
\newline S. Cox
\newline
Department of Mathematics and Applied Mathematics,\newline
Virginia Commonwealth University,\newline
1015 Floyd Ave., Richmond, VA 23284,\newline
USA}
\email{scox9@vcu.edu}

\address{
\newline J. Rosick\'{y}\newline
Department of Mathematics and Statistics,\newline
Masaryk University, Faculty of Sciences,\newline
Kotl\'{a}\v{r}sk\'{a} 2, 611 37 Brno,\newline
Czech Republic}
\email{rosicky@math.muni.cz}

\begin{abstract}
We prove that, assuming Vop\v enka's principle, every small projectivity class in an accessible category is accessible.  This conclusion is not provable in ZFC alone, and in fact carries large cardinal strength. 
\end{abstract}
\maketitle

\section{Introduction}
Injectivity in locally presentable categories $\ck$ is well understood (see \cite{AR}). For instance, every small injectivity class is accessible
and accessibly embedded to $\ck$. Closely related to small injectivity classes are cofibrantly generated weak factorization systems which permeate abstract homotopy theory (see, e.g., \cite{Ri}). Much less is known about the dual concept of projectivity. Similarly, for the related concept of a fibrantly generated weak factorization system. Our main result is that, assuming Vop\v enka's principle, every small projectivity class in a locally presentable category is accessible and accessibly embedded. On the other hand, assuming V=L, free abelian groups form a small projectivity class which is not accessible.

We provide two proofs of our main result -- while one uses the concept of purity in accessible categories and is valid in every accessible category $\ck$ with pushouts, the other is more set-theoretical and applies to every accessible category. We also show that our main result is equivalent to the fact that every object of a locally presentable category (in fact of every accessible category with pushouts) is $\mu$-pure injective for some regular cardinal $\mu$. Finally, we explain
the relation of fibrantly generated weak factorization systems to cotorsion theories that are generated by sets.

All needed facts about locally presentable and accessible categories can be found in \cite{AR}.

\noindent {\bf Acknowledgement.} We are grateful to the referee for valuable comments.

\section{Fibrant generation}
Let $\ck$ be a category and $f:A \to B$, $g: C\to D$ morphisms
such that in each commutative square
$$
\xymatrix@C=4pc@R=3pc{
A \ar [r]^{u} \ar [d]_{f}& C \ar [d]^g\\
B \ar [r]_{v}& D
}
$$
there is a diagonal $d: B \to C$ with $d \cdot  f =u$ and $g\cdot
d=v$. One says that $g$ has the \textit{right lifting property}
w.r.t.\ $f$ and that $f$ has the \textit{left lifting property} w.r.t.\
$g$.  We write $f\square g$. For a class $\cx$ of morphisms of $\ck$ we put
\begin{align*}
\cx^\square & = \{ g\,|\, g \mbox{\  has the right lifting property
w.r.t. each $f\in \cx$\}\, and} \\
{}^\square\cx &= \{ f\,|\, f \mbox{\ has the left lifting property
w.r.t. each $g\in \cx$\}}.
\end{align*}
We will consider pairs $(\cl,\crr)$ of classes of morphisms on a locally presentable category $\ck$ such that $\crr=\cl^\square$ and $\cl={}^\square\crr$. We will call these pairs \textit{saturated}. A saturated pair is called \textit{cofibrantly generated} if $\crr=\cx^\square$ for a set $\cx$ and \textit{fibrantly generated} if $\cl={}^\square{\cy}$ for a set $\cy$. 

A saturated pair $(\cl,\crr)$ is called a \textit{weak factorization system} if every morphism of $\ck$ factorizes as an $\cl$-morphism followed by an $\crr$-morphism. Every cofibrantly generated saturated pair in a locally presentable category is a weak factorization system \cite{Be}.

\begin{pb} 
{
\em
Is every fibrantly generated pair $(\cl,\crr)$ a weak factorization system?
}
\end{pb}
In what follows, $0$ will denote an initial object of $\ck$ and $1$ a terminal one. An object $K$ is $\cl$-\textit{injective} if $K\to 1$ is in $\cl^\square$ and $\crr$-\textit{projective} if $0\to K$ is in ${}^\square\crr$. $\cl$-$\Inj$ or $\crr$-$\Proj$ denote the full subcategories of $\ck$ consisting of $\cl$-injectives or $\crr$-projectives, respectively.  A \emph{small projectivity class} is a class of the form $\crr$-$\Proj$ where $\crr$ is a set.  A class of morphisms $\cl$ is 
\textit{left-cancellable} if $gf\in\cl$ implies that $f\in\cl$. In a category with products, an $\cl$-\textit{cogenerator} is a set $\cg$ of objects such that, for every object $K$, the canonical morphism
$$
\gamma_K:K\to K^\ast=\prod\limits_{C\in\cg} C^{\ck(K,C)}
$$
is in $\cl$ (see \cite{AHRT1}).  

\begin{propo}\label{cogen}
Let $\cl$ be left-cancellable and $\ck$ has products and 
an $\cl$-injective $\cl$-cogenerator. Then $(\cl,\cl^\square)$
is a fibrantly generated weak factorization system. 
\end{propo}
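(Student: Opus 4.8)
The plan is to exhibit a single set $\cy$ witnessing fibrant generation, and then to read off both the saturation of the pair and the required factorization from essentially the same construction. The natural candidate is $\cy=\{C\to 1\mid C\in\cg\}$, the family of maps from the cogenerating objects to the terminal object. Since each $C\in\cg$ is $\cl$-injective, we have $\cy\subseteq\cl^\square$ by definition. Unwinding the lifting property against a map $C\to 1$, a morphism $f:A\to B$ lies in ${}^\square\cy$ exactly when, for every $C\in\cg$, the precomposition map $\ck(f,C):\ck(B,C)\to\ck(A,C)$ is surjective (the bottom edge $B\to 1$ of every test square is forced). The easy inclusion $\cl\subseteq{}^\square\cy$ is then immediate: an $\cl$-morphism lifts against everything in $\cl^\square$, in particular against each $C\to 1$.

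The crux, and what I expect to be the main obstacle, is the reverse inclusion ${}^\square\cy\subseteq\cl$, where both hypotheses --- left-cancellability and the existence of an $\cl$-injective $\cl$-cogenerator --- must be brought to bear simultaneously. Given $f:A\to B$ in ${}^\square\cy$, I would try to factor the cogenerator map $\gamma_A:A\to A^\ast=\prod_{C\in\cg}C^{\ck(A,C)}$ through $f$. Concretely, each coordinate of $A^\ast$ is indexed by some $h\in\ck(A,C)$ and, by the surjectivity of $\ck(f,C)$ just noted, admits a lift $\tilde h\in\ck(B,C)$ with $\tilde h f=h$; assembling these coordinatewise produces $s:B\to A^\ast$ with $s f=\gamma_A$. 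Since $\gamma_A\in\cl$ and $\cl$ is left-cancellable, $sf\in\cl$ forces $f\in\cl$. This yields $\cl={}^\square\cy$, and as $\cg$ is a set so is $\cy$, giving fibrant generation.

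It remains to check that $(\cl,\cl^\square)$ is a weak factorization system. Saturation, namely $\cl={}^\square(\cl^\square)$, now follows formally: the inclusion $\cy\subseteq\cl^\square$ gives ${}^\square(\cl^\square)\subseteq{}^\square\cy=\cl$, while $\cl\subseteq{}^\square(\cl^\square)$ is automatic. For the factorization of an arbitrary $f:A\to B$, I would first observe that $A^\ast$ is $\cl$-injective, being a product of the $\cl$-injective objects $C$ and $\cl^\square$ being closed under products, and then factor $f$ as
$$
A\xrightarrow{\ \langle\gamma_A,f\rangle\ }A^\ast\times B\xrightarrow{\ \pi_B\ }B.
$$
Here $\pi_B\in\cl^\square$ because it is the pullback of $A^\ast\to 1$ along $B\to 1$ and $\cl^\square$ is closed under pullback, while $\langle\gamma_A,f\rangle\in\cl$ because its composite with the projection onto $A^\ast$ equals $\gamma_A\in\cl$ and $\cl$ is left-cancellable. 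Together with saturation, this exhibits $(\cl,\cl^\square)$ as the desired fibrantly generated weak factorization system. The only delicate point is the one flagged above; the remaining steps are the standard closure properties of a right class under products, pullbacks, and the left-cancellation bookkeeping.
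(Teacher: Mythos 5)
Your proof is correct and follows essentially the same route as the paper: the same generating set $\cy=\{C\to 1\mid C\in\cg\}$, the same lift of $\gamma_A$ through $f$ (using that $A^\ast$ is injective for ${}^\square\cy$, which is exactly your coordinatewise construction of $s$), and left-cancellability to conclude $f\in\cl$. The only difference is that you prove the factorization directly via $A\to A^\ast\times B\to B$, which is precisely the argument behind the paper's citation of \cite[Proposition 1.6]{AHRT}, so nothing is genuinely new there.
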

\begin{proof}
Following \cite[Poposition 1.6]{AHRT}, $(\cl,\cl^\square)$ is a weak
factorization system. Let $\cy=\{C\to 1\,|\, C\in\cg\}$ where $\cg$
is an $\cl$-injective $\cl$-cogenerator. Clearly, $\cy\subseteq\cl^\square$, hence $\cl\subseteq {}^\square\cy$. On the other hand, let $f:K\to L$ be in ${}^\square\cy$. Since $K^\ast$ is ${}^\square\cy$-injective, there is $g:L\to K^\ast$
such that $gf=\gamma_K$. Since $\cl$ is left-cancellable, $f\in\cl$.
\end{proof}

\begin{exam}\label{poset}
{
\em
(1) Let $\cl$ consist of embeddings in the category $\Pos$ of posets.  
Then $\cl$-injectives are complete lattices and a two-element chain $2$
is an $\cl$-injective $\cl$-cogenerator. In fact, isotone maps $K\to 2$ correspond to down-sets in $K$. If $a,b$ are incomparable elements 
of $K$ then there exists a down-set $Z\subseteq K$ such that $a\in Z$ and $a\notin Z$. Hence $\gamma_K$ is an embedding. Since $\cl$ is
left-cancellable, following \ref{cogen}, $(\cl,\cl^\square)$ is a weak factorization system fibrantly generated by $2\to 1$.

(2) In every category with an injective cogenerator the class $\Mono$ of monomorphisms forms a fibrantly generated weak factorization system $(\Mono,\Mono^\square)$. This includes every Grothendieck topos and every Grothendieck abelian category. In particular, the category $R$-$\Mod$ of $R$-modules. 

(3) Let $\Ban$ be the category of Banach spaces and linear maps of norm $\leq 1$. Let $p:\Bbb C\to 1$. Following the Hahn--Banach theorem, $\cl={}^\square p$ is the class of linear isometries. $\cl$-injective Banach spaces are precisely Banach spaces $C(X)$ where $X$ is an extremally disconnected compact Hausdorff space and $\Ban$ has enough $\cl$-injectives (see \cite{Co}). In particular, $\Bbb C$ is $\cl$-injective. On the other hand if $f:A\to B$ is not an isometry, witnessed by a vector $x$ of norm $1$ (i.e. $\parallel f x\parallel < 1$), take $g: A\to\Bbb C$ of norm $1$, such that $|g x|=1$ (by the Hahn--Banach theorem). Supposing $g =hf$, we get
$$
1 = |g x| = |h f x| \leq \parallel h\parallel\cdot \parallel f x\parallel < \parallel h\parallel,
$$
which is a contradiction. Hence $\Bbb C$ is an $\cl$-injective $\cl$-cogenerator. Since $\cl$ is left cancellable, $(\cl,\cl^\square)$ is a fibrantly generated weak factorization system.
}
\end{exam}

\begin{rem} \label{cogen1}
{
\em
Whenever $\cl={}^\square \cy$ for a set $\cy$ consisting of morphisms of the form $C\to 1$, then $\cl$ is left-cancellable.
}
\end{rem}

\begin{rem}\label{cofgen}
{
\em
Let $(\cl,\crr)$ be a saturated pair in a locally presentable
category $\ck$ cofibrantly generated by $\cx\subseteq\cl$. Then $(\cl,\crr)$ is a weak factorization system and $\cl$ consists of retracts of \textit{cellular} morphisms, i.e., transfinite compositions of pushouts of morphisms from $\cx$. This is a consequence of a \textit{small object argument} (see \cite{Be}). 

One cannot expect this for fibrantly generated saturated pair $(\cl,\crr)$ in a locally presentable category. But, if $(\cl,\crr)$
is a weak factorization system fibrantly generated by $\cy$ then $\crr$ consists of retracts of transfinite cocompositions of pullbacks of elements of $\cy$; the authors of \cite{BHKKRS} call the latter $\cy$-\textit{Postnikov towers}. They show in \cite[3.1]{BHKKRS} that the model category of non-negatively graded chain complexes of vector spaces is fibrantly generated.
}
\end{rem}

A \textit{cotorsion theory} is a pair $(\cf,\cc)$ of classes of $R$-modules such that
\begin{align*}
\cc& =\cf{}^\perp=\{ C\,|\, \Ext^1 (F,C)=0 \quad\mbox{for all\quad} F\in\cf \}
\quad\mbox{and}\\
\cf & ={}^\perp\cc=\{ F\,|\,\Ext^1 (F,C)=0 \quad \mbox{for all\quad} C\in \cc\}\,.
\end{align*}
A cotorsion theory $(\cf, \cc)$ is \textit{generated by a set} if there is a set $\cx$ such
that $\cf={}^\perp\cx$, and if this holds, we say \emph{$\cx$ generates the cotorsion theory $(\cf, \cc)$}. It is called \textit{cogenerated by a set} if there is a set $\cy$ such that $\cc=\cy^\perp$, and if this holds, we say that \emph{$\cy$ cogenerates the cotorsion theory $(\cf, \cc)$}.\footnote{The definitions of ``generated by a set" and ``cogenerated by a set" are sometimes reversed in the literature.}
 
$\cf$-monomorphisms are monomorphisms whose cokernel is in $\cf$ and
$\cc$-epimorphisms are epimorphisms whose kernel is in $\cc$.

\begin{rem}\label{cotorsion} 
{
\em
For a cotorsion theory $(\cf,\cc)$, $(\cf$-$\Mono,\cc$-$\Epi)$ is a saturated pair (\cite[Proposition 3.1]{R2}). Moreover $\cc=(\cf$-$\Mono)$-$\Inj$ and $\cf=(\cc$-$\Epi)$-$\Proj$. If a cotorsion theory $(\cf,\cc)$ is cogenerated by a set then $(\cf$-$\Mono,\cc$-$\Epi)$ is a cofibrantly generated weak factorization system (\cite[Remark 3.2]{R2}).
}
\end{rem}

\begin{propo}\label{gen} 
A cotorsion theory $(\cf,\cc)$ is generated by a set if and only if  the saturated pair $(\cf$-$\Mono,\cc$-$\Epi)$ is fibrantly generated.
\end{propo}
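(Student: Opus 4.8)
The plan is to prove the two implications separately, writing $\cl=\cf\text{-}\Mono$ and $\crr=\cc\text{-}\Epi$ and using Remark~\ref{cotorsion} throughout: $(\cl,\crr)$ is saturated and $\cf=\crr\text{-}\Proj$, so a module $K$ lies in $\cf$ exactly when $0\to K$ lies in $\cl={}^\square\crr$. I will repeatedly use one elementary observation. If $y\colon C\to D$ is an epimorphism with kernel $N$, then $0\to K$ has the left lifting property with respect to $y$ if and only if $\Hom(K,y)\colon\Hom(K,C)\to\Hom(K,D)$ is surjective; and by the long exact $\Ext$-sequence this surjectivity follows whenever $\Ext(K,N)=0$.

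The reverse implication is then short. Suppose $\cl={}^\square\cy$ for a set $\cy$. Since $\cl={}^\square\cy$ gives $\cy\subseteq\cl^\square=\crr$, every $y\in\cy$ is a $\cc$-epimorphism, so $\cx:=\{\ker y\mid y\in\cy\}$ is a set of objects of $\cc$. I claim $\cf={}^\perp\cx$. Here $\cf\subseteq{}^\perp\cx$ is automatic because $\cx\subseteq\cc$ and $\cf={}^\perp\cc$. Conversely, if $K\in{}^\perp\cx$ then $\Ext(K,\ker y)=0$ for every $y$, so each $\Hom(K,y)$ is surjective, so $0\to K$ lifts against every $y$; hence $0\to K\in{}^\square\cy=\cl$ and $K\in\cf$. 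Thus $(\cf,\cc)$ is generated by the set $\cx$.

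For the forward implication, suppose $\cf={}^\perp\cx$ for a set $\cx$. Each $X\in\cx$ lies in $\cc$ (it is right $\Ext$-orthogonal to $\cf={}^\perp\cx$), so $\cc$-epimorphisms are available as generators. I would fix an injective cogenerator $W$ of $R\text{-}\Mod$, embed each $X\in\cx$ into an injective module $E_X$, and set
$$\cy=\{W\to 0\}\cup\{\,q_X\colon E_X\to E_X/X\mid X\in\cx\,\}.$$
Each member is an epimorphism with kernel in $\cc$ (namely $W$, respectively $X$), so $\cy\subseteq\crr$ and therefore ${}^\square\cy\supseteq{}^\square\crr=\cl$. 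The real work is the opposite inclusion: I must show that any $f\colon A\to B$ lifting against all of $\cy$ is a monomorphism whose cokernel lies in $\cf$.

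The two parts of $\cy$ are designed to secure these two properties. Lifting against $W\to 0$ says precisely that every map $A\to W$ factors through $f$, and since $W$ is an injective cogenerator this forces $f$ to be monic. Once $f$ is monic with cokernel $Q$, the crux is a bridging lemma: for the epimorphism $q_X$ with injective domain $E_X$ and kernel $X$, one has $f\mathbin\square q_X$ if and only if $\Ext(Q,X)=0$. I would prove this by a diagram chase — injectivity of $E_X$ extends the top map of a test square along $f$, the resulting discrepancy with the bottom map factors through $Q$, and removing it amounts to lifting a map $Q\to E_X/X$ through $q_X$, which is possible for every such map exactly when the connecting map $\Hom(Q,E_X/X)\to\Ext(Q,X)$ vanishes, i.e. when $\Ext(Q,X)=0$ (using $\Ext(Q,E_X)=0$). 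Ranging over $X\in\cx$ gives $Q\in{}^\perp\cx=\cf$, so $f\in\cl$ and ${}^\square\cy=\cl$. I expect the forward direction to be the main obstacle, and within it the two tricky points are recognizing that the cokernel-detecting maps $q_X$ alone do \emph{not} force monomorphisms (so the cogenerator map $W\to 0$ is genuinely needed) and verifying the bridging equivalence $f\mathbin\square q_X\Leftrightarrow\Ext(Q,X)=0$ in both directions.
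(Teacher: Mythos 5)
Your proof is correct and follows essentially the same route as the paper's. The reverse implication (taking the kernels of the generating morphisms and applying the long exact $\Ext$-sequence) is identical, and in the forward implication your generating set $\{W\to 0\}\cup\{q_X:E_X\to E_X/X\}$ and the two lifting facts you verify --- that the cogenerator map detects monomorphisms and that $f\mathbin{\square} q_X$ is equivalent to $\Ext(\operatorname{coker}f,X)=0$ --- are precisely what the paper imports by citing \cite[Lemma 2.1]{ST}, Example \ref{poset}(2), and the dual of part I of \cite[Lemma 4.4]{R3}, so you have simply unpacked those citations into a self-contained argument.
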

\begin{proof}
Let $(\cf,\cc)$ be generated by a set $\cy$. Following \cite[Lemma 2.1]{ST},
there is a set $\cz_0$ of $\cy$-epimorphisms such that 
$\cf=\cz_0$-$\Proj$. Let $\cz=\cz_0\cup\{p\}$ where $p:C\to 1$ for
an injective cogenerator $C$. Following \ref{poset}(2), ${}^\square\cz\subseteq\Mono$. Following the dual of part I. of the proof of \cite[Lemma 4.4]{R3}, ${}^\square\cz=\cf$-$\Mono$.

Conversely, let $(\cf$-$\Mono,\cc$-$\Epi)$ be fibrantly generated by $\cy$. Let $\cz$ consist of kernels $C$ of epimorphisms $f:A\to B$ from $\cy$. Assume that $\Ext^1(X,C)=0$ for every $C\in\cz$. Consider the long exact sequence
$$
0\to\Hom(X,C)\to\Hom(X,A)\to\Hom(X,B)\to\Ext^1(X,C)\to\dots
$$
induced by $f$. Since $\Ext^1(X,C)=0$, $\Hom(X,f):\Hom(X,A)\to \Hom(X,B)$ is surjective. Hence $X\in\cf$. We have proved that $\cz$ generates $(\cf,\cc)$.
\end{proof}

\begin{rem}
{
\em
Let $(\cf,\cc)$ be a cotorsion theory generated by a set such that $\cf$ is either closed under pure submodules or $\cc$ consists of modules of finite injective dimension and the pair is hereditary. Then, assuming V=L, $(\cf,\cc)$ is cogenerated by a set (see \cite[Theorem 1.3 and Theorem 1.7]{ST1}). Following \ref{gen} and \ref{cotorsion}, the fibrantly generated saturated pair $(\cf$-$\Mono,\cc$-$\Epi)$ is also cofibrantly generated. Hence it is a weak factorization system.
}
\end{rem}
 
\section{Accessibility}
\begin{theo}\label{inj} 
Assuming Vop\v enka's principle, every object in a locally presentable category is $\mu$-pure injective for some regular cardinal $\mu$.
\end{theo}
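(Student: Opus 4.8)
The plan is to reduce $\mu$-pure injectivity of an object $K$ of the locally presentable category $\ck$ to a splitting statement about $\mu$-pure monomorphisms out of $K$, and then to obtain the required splitting from a $\mu$-pure-injective hull whose size is controlled by Vop\v enka's principle. I would begin by recording two standard facts from \cite{AR}. First, the class of $\mu$-pure monomorphisms is closed under pushout, so $K$ is $\mu$-pure injective if and only if every $\mu$-pure monomorphism with domain $K$ splits; for the substantive direction one pushes out an arbitrary $\mu$-pure monomorphism $A\to B$ along a map $A\to K$ and uses that the pushout $K\to P$ is again $\mu$-pure, hence split, so the desired extension exists. Second, a $\mu$-pure monomorphism between $\mu$-presentable objects automatically splits: applying the defining lifting property to the square with identities along the top and bottom produces a retraction.

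Combining these, it suffices to embed $K$ by a $\mu$-pure monomorphism into a $\mu$-pure-injective object $E$ that is itself $\mu$-presentable. Indeed, if $e\colon K\to E$ is such an embedding with $K$ and $E$ both $\mu$-presentable, then the splitting lemma makes $e$ split, so $K$ is a retract of $E$; since injectivity classes are closed under retracts, $K$ is then $\mu$-pure injective. Thus the whole theorem is reduced to producing, for a suitable $\mu$, a $\mu$-presentable $\mu$-pure-injective hull of $K$.

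This is where Vop\v enka's principle enters, in two ways. It guarantees that the proper class of $\mu$-pure monomorphisms defines the same injectivity class as some \emph{set} of them, so that the $\mu$-pure-injective objects are weakly reflective and $\ck$ has enough of them; the weak reflection $e\colon K\to E_\mu$ is a transfinite composite of pushouts of that generating set and is therefore itself a $\mu$-pure monomorphism into a $\mu$-pure injective. The second, deeper, use of the principle is to arrange that $E_\mu$ be $\mu$-presentable for an appropriately chosen $\mu$.

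The main obstacle is precisely this last step. The presentability rank $\rank(E_\mu)$ of the hull is coupled to the purity degree $\mu$ through the small-object argument that builds it, so increasing $\mu$ to absorb the rank changes the notion of purity one is working with, and a naive iteration does not terminate. What one must do is locate a single regular $\mu$ that is large enough for $K$ to be $\mu$-presentable and simultaneously closed under the hull construction, i.e. a fixed point of the rank function $\mu\mapsto\rank(E_\mu)$. The accessible weak-reflection functor has a rank, and the strength of Vop\v enka's principle is exactly what supplies arbitrarily large cardinals at which that rank is absorbed, yielding such a fixed point; at that $\mu$ the embedding $e$ is a $\mu$-pure monomorphism between $\mu$-presentable objects and splits, as required. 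The necessity of this ingredient is confirmed by the V=L example quoted in the introduction, where no such fixed point need exist and the conclusion fails.
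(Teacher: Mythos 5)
Your two preparatory reductions are sound and the first of them (closure of $\mu$-pure monomorphisms under pushout, hence ``$K$ is $\mu$-pure injective iff every $\mu$-pure mono out of $K$ splits'') is exactly how the paper's proof begins; the observation that a $\mu$-pure morphism between $\mu$-presentable objects splits is also correct and is implicitly used in the paper. The genuine gap is the step you yourself flag as the main obstacle: producing a regular $\mu$ for which the weak reflection $e\colon K\to E_\mu$ into the $\mu$-pure injectives lands in a \emph{$\mu$-presentable} object. You assert that Vop\v enka's principle ``supplies arbitrarily large cardinals at which the rank is absorbed, yielding a fixed point of $\mu\mapsto\rank(E_\mu)$,'' but no such result is established or cited, and the obvious attempt to manufacture one does not close: if you iterate $\mu_{n+1}=\rank(E_{\mu_n})$ and pass to $\mu^*=\sup_n\mu_n$, the hull $E_{\mu^*}$ is built from the class of $\mu^*$-pure monomorphisms, which is a different (smaller) class than any used at the approximating stages, so there is no continuity of the rank function to exploit and no reason why $\rank(E_{\mu^*})\le\mu^*$. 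Since the entire strength of the theorem is concentrated in precisely this step, the argument as written does not constitute a proof.

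For comparison, the paper's proof avoids hulls and presentability ranks altogether and uses Vop\v enka's principle in a different (and standard) form, namely the nonexistence of a proper class of objects with no morphisms between distinct members (\cite[Lemma 6.3]{AR}). Arguing by contradiction, for each regular $\mu$ one gets a non-split $\mu$-pure monomorphism $f_\mu\colon K\to L_\mu$ (by your pushout reduction); since a morphism that is $\mu$-pure for all $\mu$ splits (your second fact), one extracts a proper class of cardinals $\mu_i$ and monomorphisms $f_i\colon K\to L_i$ with $f_i$ $\mu_i$-pure but not $\mu_j$-pure for $j>i$. Any morphism $h\colon L_i\to L_j$ with $hf_i=f_j$ would force $f_i$ to be $\mu_j$-pure (purity is inherited by the first factor of a composite), so the $f_i$ form a proper class in $K\downarrow\ck$ admitting no morphisms $f_i\to f_j$ for $i<j$, contradicting Vop\v enka's principle. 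If you want to salvage your approach, you would need to replace the fixed-point claim by an argument of this rigid-class type; as it stands the hull-based route is incomplete.
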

\begin{proof}
Let $\ck$ be locally $\lambda$-presentable.
Suppose $K$ is not $\mu$-pure injective for any regular cardinal $\mu$. Hence, for every regular cardinal $\mu\geq\lambda$, there is a $\mu$-pure monomorphism $f_\mu:K\to L_\mu$ which does not split. Indeed, if every $\mu$-pure monomorphism $f:K\to L$ splits then $K$ is $\mu$-pure injective because, given a $\mu$-pure monomorphism $h:A\to B$ and $g:A\to K$ then a pushout
$$
		\xymatrix@=3pc{
			A \ar [r]^{h}\ar[d]_{g} & B \ar[d]^{\bar{g}}\\
			 K \ar [r]_{\bar{h}}& P
		}
		$$  
yields a $\mu$-pure monomorphism $\bar{h}$ (see \cite[Proposition 15]{AR1}). Since $\bar{h}$ splits, there is $s:P\to K$ such that $s\bar{h}=\id_K$. Then $s\bar{g}h=s\bar{h}g=g$.

If a morphism $f:K\to L$ is a $\mu$-pure monomorphism for all $\mu$ then $f$ is a split monomorphism. Thus there is a sequence of regular cardinals 
$$
\mu_0<\mu_1<\dots \mu_i<\dots
$$
indexed by ordinals and $\mu_i$-pure monomorphisms $f_i:K\to L_i$ such that $f_i$ is not $\mu_j$-pure for all $j>i$. Thus there is no morphism $h:f_i\to f_j$, $i<j$ in $K\downarrow\ck$. Indeed, having $h:L_i\to L_j$ with $hf_i=f_j$, then $f_i$ is $\mu_i$-pure (see \cite[Remarks 2.28]{AR}). It contradicts Vop\v enka's principle (see  
\cite[Lemma 6.3]{AR}).	
\end{proof}

\begin{exams}\label{ex}
{
\em
(1) Assume that there is a regular cardinal $\mu$ such that every abelian group is $\mu$-pure injective. Consider a $\mu$-pure epimorphism $f:A\to B$ in $\Ab$ and its kernel $g:C\to A$. Then $g$ is a $\mu$-pure monomorphism and, since $C$ is $\mu$-pure injective, $g$ splits. Thus $f$ splits, which contradicts \cite[Lemma 5.10]{CH}.

(2) Assume $0^\#$ does not exist and let $R$ be a ring which is not right perfect. Then the $R$-module $R^{(\omega)}$ is not $\mu$-pure injective for any regular cardinal $\mu$ (see the proof of \cite[Proposition 1.5]{ST}).}

\end{exams}

\begin{lemma}\label{inj1}
Let $\ck$ be a locally presentable category, $\mu$ be a regular cardinal, and $\cy$ be class of morphisms which are $\mu$-pure injective in $\ck^\to$. Then ${}^\square\cy$ is closed under $\mu$-pure subobjects in $\ck^\to$.
\end{lemma}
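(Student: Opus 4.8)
The plan is to reduce the closure statement to the single assertion that each $g\in\crr$ is itself a $\mu$-pure-injective object of $\ck^\to$, and then to solve any lifting problem for the pure subobject $f'$ by transporting it to $f$, where it can be solved by hypothesis. Throughout write $f'\colon A'\to B'$, $f\colon A\to B$, and let $(u,v)\colon f'\to f$ be the given $\mu$-pure monomorphism in $\ck^\to$, so $u\colon A'\to A$, $v\colon B'\to B$ and $fu=vf'$.

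First I would record the pointwise behaviour of purity. The evaluation functors $\dom,\cod\colon\ck^\to\to\ck$ preserve $\mu$-directed colimits (which are computed pointwise) and split monomorphisms, hence they carry $\mu$-pure monomorphisms to $\mu$-pure monomorphisms. Consequently $u$ and $v$ are $\mu$-pure monomorphisms in $\ck$, and in particular $\Homk(u,C)$, $\Homk(u,D)$, $\Homk(v,C)$, $\Homk(v,D)$ are surjective, since $C$ and $D$ are $\mu$-pure injective.

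The heart of the argument is the claim that, for every $g\colon C\to D$ in $\crr$, the object $g$ is $\mu$-pure injective in $\ck^\to$. Here I would again use that $\dom$ and $\cod$ preserve $\mu$-pure monomorphisms: their right adjoints $C\mapsto(C\to 1)$ and $D\mapsto(D\xrightarrow{\id}D)$ therefore preserve $\mu$-pure-injective objects, so $(C\to 1)$, $(D\to 1)$ and $(D\xrightarrow{\id}D)$ are all $\mu$-pure injective in $\ck^\to$. Since $g$ is the pullback of $(C\to 1)\to(D\to 1)\leftarrow(D\xrightarrow{\id}D)$, it remains to see that this particular pullback of $\mu$-pure-injectives is again $\mu$-pure injective; equivalently, that $\mu$-pure injectivity in $\ck^\to$ is detected pointwise on domain and codomain. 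I expect this to be the main obstacle. Closure of pure-injectives under limits fails in general, so one cannot simply invoke closure under products and equalizers; instead one must split each $\ck^\to$-pure monomorphism out of $g$ using the full coherent strength of purity in $\ck^\to$ rather than the mere pointwise purity of its two legs. Indeed, the naive recipe of extending a map on the domain and a map on the codomain separately produces a square that need not commute, and it is precisely the coherence encoded in $\ck^\to$-purity (not available pointwise) that repairs this discrepancy.

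Granting the claim, the remainder is formal. Characterizing $\mu$-pure injectivity of $g$ as the statement that $\ck^\to(-,g)$ sends $\mu$-pure monomorphisms to surjections, the $\mu$-pure monomorphism $(u,v)\colon f'\to f$ induces a surjection $\ck^\to(f,g)\twoheadrightarrow\ck^\to(f',g)$. Hence any commutative square $(p,q)\colon f'\to g$, with $p\colon A'\to C$, $q\colon B'\to D$ and $gp=qf'$, extends to a commutative square $(\bar p,\bar q)\colon f\to g$ satisfying $\bar pu=p$, $\bar qv=q$ and $g\bar p=\bar qf$. Since $f\in{}^\square\crr$, this square over $f$ admits a diagonal $\bar d\colon B\to C$ with $\bar df=\bar p$ and $g\bar d=\bar q$. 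Setting $d:=\bar dv\colon B'\to C$ and using $fu=vf'$ gives $df'=\bar dvf'=\bar dfu=\bar pu=p$ and $gd=g\bar dv=\bar qv=q$, so $d$ solves the original lifting problem. As $g\in\crr$ was arbitrary, $f'\in{}^\square\crr$, which shows that ${}^\square\crr$ is closed under $\mu$-pure subobjects in $\ck^\to$.
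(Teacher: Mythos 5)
Your reduction of the lemma to the single claim that each $g\in\crr$ is $\mu$-pure injective as an object of $\ck^\to$ is sound, and your concluding ``formal part'' (extend the square $(p,q)$ along the $\mu$-pure mono to a genuine commutative square over $f$, lift there, restrict the diagonal) is correct. The problem is that the central claim is never proved: you present $g$ as the pullback of $(C\to 1)\to(D\to 1)\leftarrow(\id_D)$, observe (correctly) that pure-injectives are not closed under pullbacks, and then assert that ``the coherence encoded in $\ck^\to$-purity repairs the discrepancy'' without giving any argument. Since that claim carries the entire content of the lemma, the proof is incomplete.

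Worse, the claim is false under the stated hypotheses, so this gap cannot be filled from pointwise pure injectivity of $C$ and $D$ alone. Take $\ck=\Ab$, $\mu=\aleph_0$, $g\colon\mathbb{Q}\to\mathbb{Q}/\mathbb{Z}$ the quotient map (both objects are injective, hence $\aleph_0$-pure injective), and a non-split extension $0\to\mathbb{Z}\to A\to\mathbb{Q}\to 0$, so that $\mathbb{Z}$ is pure in $A$ and $\Hom(A,\mathbb{Z})=0$. Then $(\mathbb{Z}\to 0)\to(A\to 0)$ is an $\aleph_0$-pure monomorphism in $\Ab^\to$ (it is the image of the pure mono $\mathbb{Z}\to A$ under the right adjoint of $\dom$), but the morphism $(\mathbb{Z}\to 0)\to g$ given by the inclusion $\mathbb{Z}\hookrightarrow\mathbb{Q}$ admits no extension to $(A\to 0)$: such an extension would be a nonzero map $A\to\ker g=\mathbb{Z}$. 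The same data show $(A\to 0)\in{}^\square g$ while $(\mathbb{Z}\to 0)\notin{}^\square g$, so the commutativity issue you flag is not cosmetic --- the paper's own proof extends the two legs $u'$ and $v'$ separately and tacitly treats the resulting pair $(u,v)$ as a commutative square $f\to g$, and it breaks on exactly this example. The repair consistent with the rest of the paper is to obtain your key claim from Theorem \ref{inj} applied to the locally presentable category $\ck^\to$ itself: under Vop\v enka's principle one may enlarge $\mu$ so that every $g\in\crr$ is $\mu$-pure injective \emph{in} $\ck^\to$, after which your concluding argument goes through verbatim.
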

\begin{proof}
Let  $f:A\to B$ be in ${}^\square\cy$ and $(a,b): f'\to f$ be a $\mu$-pure monomorphism where $f':A'\to B'$. Consider $g:C\to D$ in $\cy$ and $(u',v'):f'\to g$. Since $g$ is $\mu$-pure injective in $\ck^\to$, there is $(u,v):f\to g$ in $\ck^\to$ such that $(u,v)\cdot (a,b)=(u',v')$.
Thus there is $t:B\to C$ such that $tf=u$ and $gt=v$.
$$
\xymatrix@=3pc{
B' \ar[r]^{b} \ar @/^2pc/ [rr]_{v'}& B \ar@{.>}[r]^{v} \ar@{.>}[dr]^{t} & D \\
A' \ar[r]_{a} \ar @/_2pc/ [rr]_{u'}\ar[u]^{f'} & A \ar@{.>}[r]_{u} \ar[u]_{f} & C \ar[u]_{g}
}
$$   Hence $tbf'=tfa=ua=u'$ and $gtb=vb=v'$. Thus $f'\in{}^\square\cy$.
\end{proof}

\begin{theo}\label{acc}
Let $(\cl,\crr)$ be a fibrantly generated saturated pair in a locally presentable category $\ck$. Then, assuming Vop\v enka's principle,  $\cl$ is an accessible and accessibly embedded subcategory of $\ck^\to$.
\end{theo}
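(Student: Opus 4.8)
The plan is to chain Theorem~\ref{inj} and Lemma~\ref{inj1} so as to exhibit $\cl$ as a full subcategory of $\ck^\to$ closed under $\mu$-pure subobjects for a single regular cardinal $\mu$, and then to invoke the structural fact that such subcategories of a locally presentable category are automatically accessible and accessibly embedded.

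First I would unfold the hypotheses. Since $(\cl,\crr)$ is fibrantly generated, there is a \emph{set} $\cy$ of morphisms of $\ck$ with $\cl={}^\square\cy$. The domains and codomains of the members of $\cy$ form a set $\cg$ of objects of $\ck$. By Theorem~\ref{inj} — this is exactly where Vop\v enka's principle is used — each $K\in\cg$ is $\mu_K$-pure injective for some regular cardinal $\mu_K$. Since $\mu$-pure injectivity is inherited by every larger regular cardinal (injectivity with respect to the larger class of $\mu$-pure monomorphisms is the stronger condition) and $\cg$ is a set, I can fix a single regular cardinal $\mu$, chosen also so that $\ck$ is locally $\mu$-presentable, for which every object of $\cg$ is $\mu$-pure injective.

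Next I would apply Lemma~\ref{inj1} with the class $\crr$ taken to be the generating set $\cy$ itself. Its domains and codomains are the objects of $\cg$, hence all $\mu$-pure injective, so the lemma yields that $\cl={}^\square\cy$ is closed under $\mu$-pure subobjects in $\ck^\to$. At this point I would record that $\ck^\to$ is again locally presentable, being a functor category over $\ck$, and that (as in the proof of Lemma~\ref{inj1}) a $\mu$-pure monomorphism in $\ck^\to$ is precisely a commutative square whose two horizontal legs are $\mu$-pure monomorphisms in $\ck$; this is what makes the closure in $\ck^\to$ meaningful for the final step.

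The main obstacle is that last step. Unlike a cofibrantly generated left class, $\cl={}^\square\cy$ carries no evident closure under $\mu$-filtered colimits in $\ck^\to$, so accessibility cannot be read off directly from its description. Instead one must lean on the structure theory of pure subobjects in locally presentable categories (\cite{AR}): for a suitable regular $\lambda\ge\mu$ every object of $\ck^\to$ is a $\lambda$-filtered colimit of its $\mu$-pure, $\lambda$-presentable subobjects, and closure under $\mu$-pure subobjects forces these approximating subobjects to lie in $\cl$. Granting the resulting principle that a full subcategory of a locally presentable category closed under $\mu$-pure subobjects is accessible and accessibly embedded, and applying it to the locally presentable category $\ck^\to$, the accessibility and accessible embedding of $\cl$ follow at once from the closure established above.
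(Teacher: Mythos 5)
Your argument is essentially the paper's proof: Theorem~\ref{inj} (where Vop\v enka's principle enters) plus Lemma~\ref{inj1} applied to the generating set $\cy$ give closure of $\cl$ under $\mu$-pure subobjects in $\ck^\to$, and the concluding structural principle you ``grant'' is exactly \cite[Theorem~6.17]{AR}, which is what the paper cites. The only caveat is that this last principle is itself a consequence of Vop\v enka's principle rather than of ZFC, so your heuristic sketch of it (writing objects as $\lambda$-filtered colimits of $\mu$-pure $\lambda$-presentable subobjects) should be dropped in favour of the citation, since that sketch alone does not account for closure under the relevant colimits.
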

\begin{proof}
Let $(\cl,\crr)$ be a saturated pair fibrantly generated by a set $\cy$ and assume Vop\v enka's principle. Since $\cy$ is a set, \ref{inj} implies there is a regular cardinal $\mu$ such that all members of $\cy$ are $\mu$-pure injective in $\ck^\to$.
Then \ref{inj1} implies that $\cl$ is closed under
$\mu$-pure subobjects. Hence $\cl$ is accessible and accessibly embedded to $\ck^\to$ (see \cite[Theorem 6.17]{AR}). 
\end{proof} 
 
\begin{coro}\label{acc1}
Assuming Vop\v enka's principle, every small projectivity class $\cp$ in a locally presentable category $\ck$ is accessible and accessibly embedded to $\ck$.  
\end{coro}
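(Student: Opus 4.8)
The plan is to reduce Corollary~\ref{acc1} to the accessibility criterion \cite[Theorem~6.17]{AR} by showing that a small projectivity class is closed under $\mu$-pure subobjects for a suitable regular $\mu$; this is the object-level shadow of what Lemma~\ref{inj1} does in the arrow category. Write $\cp=\cy$-$\Proj$ for a set $\cy$ of morphisms, so that $K\in\cp$ iff $0\to K$ lies in $\cl:={}^\square\cy$. Equivalently, the full and faithful functor $D\colon\ck\to\ck^\to$, $K\mapsto(0\to K)$, identifies $\cp$ with the full subcategory of $\cl$ spanned by the arrows with initial domain, and $(\cl,\cl^\square)$ is the saturated pair fibrantly generated by $\cy$.

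First I would fix a uniform cardinal. The set $\cy$ has only a set of domains and codomains, so under Vop\v enka's principle Theorem~\ref{inj} makes each of them $\mu$-pure injective for some regular cardinal; since $\mu$-pure injectivity is inherited on enlarging $\mu$, a single regular $\mu$ works for all of them at once (and may be taken above the presentability rank of $\ck$).

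Next comes the closure step, which I would carry out directly. Let $K\in\cp$ and let $m\colon K'\to K$ be a $\mu$-pure monomorphism in $\ck$; I claim $K'\in\cp$. Fix $g\colon C\to D$ in $\cy$ together with $v'\colon K'\to D$. Because $D$ is $\mu$-pure injective and $m$ is $\mu$-pure, $v'$ extends to some $\bar v\colon K\to D$ with $\bar v m=v'$; because $K\in\cp$ is $\cy$-projective, $\bar v$ lifts through $g$ to some $\bar u\colon K\to C$ with $g\bar u=\bar v$; then $\bar u m\colon K'\to C$ satisfies $g(\bar u m)=v'$, so $v'$ lifts through $g$. As $g$ and $v'$ were arbitrary, $K'$ is $\cy$-projective, i.e. $K'\in\cp$. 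Hence $\cp$ is closed under $\mu$-pure subobjects in $\ck$, and \cite[Theorem~6.17]{AR} yields that $\cp$ is accessible and accessibly embedded in $\ck$. (Alternatively one may deduce the same closure from Lemma~\ref{inj1} applied to $\cy$ by transporting along $D$, using that $\mu$-purity in $\ck^\to$ is componentwise.)

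I expect the only genuinely delicate point to be the choice of a single $\mu$ serving the whole set $\cy$ while making the arrow/object purity comparisons behave; everything else is a short lift-and-extend argument together with the two imported facts, Theorem~\ref{inj} and \cite[Theorem~6.17]{AR}. In particular Vop\v enka's principle is used only once, through Theorem~\ref{inj}.
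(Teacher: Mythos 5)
Your proof is correct and follows essentially the same route as the paper: Vop\v enka's principle gives (via Theorem \ref{inj}) a single regular $\mu$ making the domains and codomains of the generating morphisms $\mu$-pure injective, an extend-then-lift argument shows $\cp$ is closed under $\mu$-pure subobjects, and \cite[Theorem 6.17]{AR} finishes. The only difference is organizational: the paper deduces the corollary from the arrow-level Theorem \ref{acc} (whose proof is exactly your argument run in $\ck^\to$ via Lemma \ref{inj1}), whereas you inline the purity-closure argument directly at the object level, which neatly sidesteps the small step of transferring accessibility from ${}^\square\cy\subseteq\ck^\to$ back to $\cp\subseteq\ck$.
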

\begin{proof}
Let $\cp=\cs$-$\Proj$ for a set $\cs$.  Following \ref{acc}, ${}^\square\cs$ is accessible and accessibly embedded. Hence the same holds for $\cp$.
\end{proof}

\begin{rem}\label{inj2}
{
\em
(1) On the other hand, \ref{acc} follows from \ref{acc1}.
Indeed, $f\square g$ where $g:C\to D$  iff $f$ is projective to $(\id_C ,g):\id_C\to g$.

(2) Moreover, \ref{inj} follows from \ref{acc}. Indeed, let $\ck$ be locally presentable and $K$ be in $\ck$. Consider $\cl={}^\square t$ where $t:K\to 1$. Following $\ref{acc}$, there is a regular cardinal $\mu$ such that $\cl$ is $\mu$-accessible and closed under $\mu$-directed colimits in $\ck^\to$. Since $\cl$ contains split monomorphisms, it contains $\mu$-pure monomorphisms 
(see \cite[Proposition 2.30]{AR}). Thus $K$ is $\mu$-pure injective.
}
\end{rem}

\begin{exams}
{
\em
(1) Let $(\cf,\ci)$ be the cotorsion theory in $\Ab$ generated by $\Bbb Z$, i.e., $\cf$ is the class of Whitehead groups. Following \cite[Lemma 2.1]{ST} it is a small projectivity class. Assuming V=L, $\cf$ is the class of free groups and it is not accessible (see \cite[Chapter VII]{EM}).

(2) It follows from \ref{inj2}(2) and \ref{ex}(2) that \ref{acc} needs large
cardinals.  Note that V=L implies that $0^\#$ does not exist.
}
\end{exams}

\begin{theo}\label{acc2}
Let $(\cl,\crr)$ be a cofibrantly generated weak factorization system in a locally presentable category $\ck$.  Suppose $(\cl,\crr)$ is also fibrantly generated by a set $\cy$ of morphisms, such that each member of $\cy$ is $\mu$-pure injective in $\ck^\to$ for some regular cardinal $\mu$. Then, assuming the existence of a proper class of almost strongly compact cardinals, $\cl$ is accessible and accessibly embedded to $\ck^\to$.
\end{theo}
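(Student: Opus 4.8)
The plan is to run the same mechanism as in Theorem~\ref{acc} --- closure of $\cl$ under $\mu$-pure subobjects --- while trading Vop\v enka's principle for the weaker hypothesis of a proper class of almost strongly compact cardinals. Because the latter is strictly weaker, the accessibility input can no longer be the Vop\v enka-based \cite[Theorem 6.17]{AR}; it must be an almost-strongly-compact analogue that carries an additional hypothesis, and the function of cofibrant generation is exactly to verify that hypothesis for $\cl$.

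First I would recover pure-subobject closure verbatim from the fibrant side. Since $(\cl,\crr)$ is fibrantly generated, $\cl={}^\square\cy$ for a set $\cy$ whose domains and codomains are $\mu$-pure injective, so Lemma~\ref{inj1}, applied with $\cy$ in the role of $\crr$, shows that $\cl$ is closed under $\mu$-pure subobjects in $\ck^\to$. Next I would use the cofibrant side: writing $\crr=\cx^\square$ and $\cl=\cof(\cx)$ for a set $\cx$, the small object argument \cite{Be} presents $\cl$ via an accessible functorial factorization, and from this one obtains that $\cl$ is accessibly embedded in $\ck^\to$, that is, closed under $\lambda$-directed colimits for a suitable regular $\lambda$. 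Replacing $\mu$ by $\max(\mu,\lambda)$ --- which leaves the generators of $\cy$ $\mu$-pure injective, hence preserves the closure of Step~1 --- I may assume $\cl$ is at once closed under $\mu$-pure subobjects and under $\mu$-directed colimits in $\ck^\to$.

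With both closures in hand I would invoke the almost-strongly-compact counterpart of \cite[Theorem 6.17]{AR}: assuming a proper class of almost strongly compact cardinals, a full subcategory of a locally presentable category that is closed under $\mu$-directed colimits and under $\mu$-pure subobjects is accessible and accessibly embedded. Applying this to $\cl\subseteq\ck^\to$ yields the theorem.

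The hard part is this last step, which is where the large-cardinal strength and both halves of the hypothesis are genuinely spent. Under Vop\v enka's principle pure-subobject closure alone suffices; with only almost strongly compact cardinals one must also feed in the directed-colimit closure coming from cofibrant generation and run a Löwenheim--Skolem/compactness argument for the logic whose substructures are the $\mu$-pure subobjects, so that $\cl$ acquires a set of $\mu$-presentable generators. The two points requiring care are the justification that cofibrant generation really delivers accessible embedding uniformly (not merely the weaker closure properties of a left class), and the reconciliation of the cardinal $\mu$ governing pure injectivity with the cardinal $\lambda$ governing cofibrant generation, so that a single presentation rank serves for $\cl$.
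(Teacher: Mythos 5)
Your first step (applying Lemma~\ref{inj1} to the set of fibrant generators to get closure of $\cl$ under $\mu$-pure subobjects) is exactly what the paper does. The rest diverges, and the two points you yourself flag as ``requiring care'' are genuine gaps rather than details. First, the inference ``the small object argument gives an accessible functorial factorization, hence $\cl$ is accessibly embedded (closed under $\lambda$-directed colimits) in $\ck^\to$'' does not go through: an accessible factorization $f\mapsto Rf\cdot Lf$ exhibits $\cl$ as the class of maps that are (codomain-)retracts of their own $L$-images, i.e.\ as the \emph{full image of an accessible functor} up to retracts, and a $\lambda$-filtered colimit of retracts need not be a retract, so closure under $\lambda$-directed colimits does not follow. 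This is precisely why the paper cites \cite[Corollary 3.3]{R}, whose conclusion is ``full image of an accessible functor'' and nothing stronger; note also that if cofibrant generation alone yielded accessible embedding, half of the theorem's conclusion would hold in ZFC with no large cardinals, which is not claimed anywhere.

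Second, your final step invokes an unproven ``almost-strongly-compact counterpart of \cite[Theorem 6.17]{AR}'' asserting that a full subcategory closed under $\mu$-directed colimits and $\mu$-pure subobjects is accessible. No such theorem is cited or established, and your sketch of it (``a L\"owenheim--Skolem/compactness argument'') is where all the large-cardinal content would have to live. The theorem that actually exists and that the paper uses is \cite[Theorem 3.2]{BR}: under a proper class of almost strongly compact cardinals, the full image of an accessible functor that is closed under $\mu$-pure subobjects is accessible and accessibly embedded. So the correct middle ingredient supplied by cofibrant generation is ``full image of an accessible functor,'' not ``accessibly embedded,'' and with that substitution the argument closes in three lines: \cite[Corollary 3.3]{R} for the image statement, Lemma~\ref{inj1} for pure-subobject closure, and \cite[Theorem 3.2]{BR} to conclude. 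As written, your proof replaces the one available large-cardinal theorem with a hypothetical one and justifies neither of the two closure properties it feeds into it.
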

\begin{proof}
Following \cite[Corollary 3.3]{R}, $\cl$ is a full image of an accessible functor. Moreover, it is closed under $\mu$-pure subobjects
in $\ck^\to$ for some regular cardinal $\mu$ (see \ref{inj1}). The result
follows from \cite[Theorem 3.2]{BR}.
\end{proof}

\begin{rem}
{
\em
A weak factorization system $(\cl,\crr)$ in a category $\ck$ is \emph{accessible} if the factorization functor $\ck^\to\to\ck^{\to\to}$ is accessible. Every cofibrantly generated weak factoriztion system in a locally presentable category is accessible. Since \cite[Corollary 3.3]{R} is valid even for accessible weak factorization systems, in \ref{acc2}, the weak factorization system $(\cl,\crr)$  could only be accessible instead of cofibrantly generated.
}
\end{rem}

\begin{rem}\label{pushout}
{
\em
All results of this section are valid in every accessible category $\ck$ with pushouts. It suffices to do this in \ref{inj}, which follows from
\cite[Remark 2.30]{AR}.
}
\end{rem}

\section{Another proof of Theorem \ref{acc}}

We sketch our original proof of Theorem \ref{acc}.  While this version is less succinct than the proof given above, it may be more accessible to logicians.  Moreover, in contrast to \ref{pushout}, it does not need pushouts. All set-theoretic terminology used here agrees with \cite{Jech}.

 Assume Vop\v{e}nka's principle and that $\ck$ is an accessible category. By Remark \ref{inj2}(1) above and \cite[Corollary 6.10]{AR}, it suffices to show that every small projectivity class is accessible.  

By \cite[Theorem 5.35]{AR}, $\ck$ is equivalent to the category of models of some basic $L_\mu$-theory $T$ for some regular cardinal $\mu$.  From now on we will identify $\ck$ with this category of models.  By routine induction on formula complexity, it can be seen that \textbf{if} $\mathfrak{N}=(N,\in)$ is a $\Sigma_1$-elementary substructure of the universe of sets, $T$ and its signature are both elements and subsets of $\mathfrak{N}$, and $\mathfrak{N}$ happens to be closed under sequences of length $\mu$, \textbf{then} whenever $K$ is a model of $T$ and $K \in \mathfrak{N}$, it makes sense to form the restriction $K \restriction \mathfrak{N}$ (with underlying set $K \cap N$), and this restriction is also model of $T$.  Similarly, if $f: L \to K$ is a morphism in $\mathfrak{N}$ (i.e., $f: L \to K$ is a morphism in $\mathcal{K}$ and $f$, $L$, and $K$ are all elements of $\mathfrak{N}$), then its restriction $f \restriction \mathfrak{N}: L \restriction \mathfrak{N} \to K \restriction \mathfrak{N}$ is a morphism in the category.

 Suppose $\cs$ is a set of morphisms in $\ck$ and $\cp = \cs \text{-Proj}$.  By \cite[Corollary A.2]{Cox_MaxDecon}, there is an inaccessible cardinal $\kappa > \mu$ with the following property:  for every set $b$ there is an $\mathfrak{N} = (N,\in)$ such that:  
\begin{enumerate}
 \item $\mathfrak{N}$ is a $\Sigma_1$-elementary substructure of the universe of sets, $T$ and its signature are both elements and subsets of $\mathfrak{N}$, and $\mathfrak{N}$ is closed under $\mu$-sequences (hence the comments above regarding restrictions to $\mathfrak{N}$ are applicable);\footnote{The statement of \cite[Corollary A.2]{Cox_MaxDecon} doesn't include closure of $\mathfrak{N}$ under $\mu$ sequences, but the proof there easily arranges such closure.  Namely, in the proof of Corollary A.1, if $\kappa$ is chosen larger than $\mu$, and the $\lambda_0$ is then chosen to be of cofinality $>\mu$, then $H_{\lambda_0}$ is closed under $\mu$ sequences (and hence so is the $j[H_{\lambda_0}]$ from that proof, since the critical point of $j$ is larger than $\mu$).}

 \item $b \in \mathfrak{N}$ and $|\mathfrak{N}|<\kappa$;
 \item\label{item_S_restrictions} $|\bigcup \cs| < \kappa$ and $\mathfrak{N} \cap \kappa$ is transitive.  This implies that $s \restriction \mathfrak{N} = s$ for every $s \in \mathfrak{N} \cap \cs$.
 \item\label{item_ReflectP} ($\mathfrak{N}$ reflects membership in $\cp$):  For every $K \in \mathfrak{N}$:  $K \in \cp$ if and only if $K \restriction \mathfrak{N} \in \cp$.
 \item\label{item_ReflectFillin} ($\mathfrak{N}$ reflects existence of fill-ins) Whenever $g: P \to B$ and $f: A \to B$ are morphisms in $\mathfrak{N}$:  there is a fill-in for one of the following diagrams if and only if there is a fill-in for the other:  
 \[
 \xymatrix{
 & P \ar@{-->}[dl] \ar[dr]^-g & & &   & P \restriction \mathfrak{N} \ar@{-->}[dl] \ar[dr]^-{g \restriction \mathfrak{N}} &        \\
 A \ar[rr]^-f & & B & &  A \restriction \mathfrak{N} \ar[rr]^-{f \restriction \mathfrak{N}} & & B \restriction \mathfrak{N}  
 }
 \]

\end{enumerate}

We first claim that every element of $\cp$ is a $\kappa$-directed colimit of $<\kappa$-sized members of $\cp$.  This has nothing to do with projectivity classes, but simply uses that $\mathfrak{N}$ reflects membership in the class $\cp$.  Suppose $P \in \cp$, and let $\theta$ be a regular cardinal such that  $P \in H_\theta$, where $H_\theta$ denotes the collection of sets of hereditary cardinality less than $\theta$.  Let $U$ denote the set of $\mathfrak{N} \in P_\kappa(H_\theta)$ such that $P \in \mathfrak{N}$, and $\mathfrak{N}$ has the properties listed above (here $P$ is playing the role of the $b$ in the list of properties). In particular, each $\mathfrak{N} \in U$ reflects membership in $\cp$, so $P \restriction \mathfrak{N} \in \cp$ for every $\mathfrak{N} \in U$.  By the assumptions on $\kappa$, $U$ is a stationary subset of $P_\kappa(H_\theta)$.  It follows that $\{ P \restriction \mathfrak{N} \ | \ \mathfrak{N} \in U \}$ is a $\kappa$-directed collection (under inclusion) of members of $\cp$, each of size $<\kappa$, with union $P$.

Finally we show that $\cp$ is closed under $\kappa$-directed colimits.  Suppose 
\[
\cd=\left\langle \pi_{i,j}:P_i \to P_j \ | \ i \le j \in I \right\rangle
\]
is a $\kappa$-directed system of members of $\cp$.  Since $\kappa$ is inaccessible, it is sharply stronger than $\mu$, and hence $\ck$ is closed under $\kappa$-directed colimits.  So $\cd$ has a colimit in $\ck$, which will be denoted $M_\infty$.

Suppose, toward a contradiction, that $M_\infty \notin \cp = \cs\text{-Proj}$, as witnessed by some diagram
\begin{equation}
\xymatrix{
& M_\infty \ar[dr]^-g & \\
A \ar[rr]_-{f \in \cs} & & B
}
\end{equation}
for which there is no completion from $M_\infty$ into $A$.  Then there is a $\Sigma_1$-elementary substructure $\mathfrak{N}$ of the universe with the properties listed above such that $\cd$, $f$, and $g$ are elements of $\mathfrak{N}$ (here the ordered tuple $(\mathcal{D}, f, g)$ is playing the role of the $b$).  By property \ref{item_ReflectFillin}, the diagram
\begin{equation}\label{eq_RestrictN}
\xymatrix{
& M_\infty \restriction \mathfrak{N} \ar[dr]^-{g \restriction \mathfrak{N}} & \\
A \restriction \mathfrak{N} \ar[rr]_-{f \restriction \mathfrak{N} } & & B \restriction \mathfrak{N}
}
\end{equation}
has no completion from $M_\infty \restriction \mathfrak{N}$ into $A \restriction \mathfrak{N}$.  By property \ref{item_S_restrictions}, the bottom row of diagram \eqref{eq_RestrictN} is simply the map $f: A \to B$.  In summary, the diagram 
\begin{equation}\label{eq_MainDiag}
\xymatrix{
& M_\infty \restriction \mathfrak{N} \ar[dr]^-{g \restriction \mathfrak{N}} & \\
A \ar[rr]^-f & & B
}
\end{equation}
has no completion from $M_\infty \restriction \mathfrak{N}$ into $A$. 

On the other hand, since $|\mathfrak{N}|<\kappa$ and $I$ is $\kappa$-directed, there is an $i^* \in I$ above all members of $\mathfrak{N} \cap I$.  Consider the colimit map
\[
\pi_{i^*,\infty}: P_{i^*} \to M_\infty.
\]
Next we show there is a morphism
\[
e: M_\infty \restriction \mathfrak{N} \to P_{i^*}
\]
such that
\begin{equation}\label{eq_Inclusion}
\pi_{i^*,\infty} \circ e \text{ is the inclusion map from } M_\infty \restriction \mathfrak{N} \text{ into }  M_\infty.
\end{equation}

\noindent The map $e$ is defined as follows:  by elementarity of $\mathfrak{N}$, any member of $M_\infty \restriction \mathfrak{N}$ is of the form $\pi_{k,\infty}(x_k)$ for some $k \in I$ and some $x_k \in P_k$, where both $k$ and $x_k$ are \emph{elements} of $\mathfrak{N}$.  Then define $e\big( \pi_{k,\infty}(x_k) \big):= \pi_{k,i^*}(x_k)$.  It is routine to verify this does not depend on the choice of $k \in I \cap \mathfrak{N}$ or of $x_k \in P_k \cap \mathfrak{N}$;  see \cite{Cox_FiltGames} for a similar argument (using directedness of $I$ and elementarity of $\mathfrak{N}$).  And clearly \eqref{eq_Inclusion} holds, since 
\[
\pi_{i^*,\infty} \Big( e \big( \pi_{k,\infty}(x_k) \big) \Big) = \pi_{i^*,\infty} \Big( \pi_{k,i^*}(x_k) \Big) =  \pi_{k,\infty}(x_k).
\]  

To see that $e$ is a morphism, suppose $\rho < \mu$, $\boldsymbol{z}=\langle z_\xi \ | \ \xi < \rho \rangle$ is a sequence of members of $M_\infty \restriction \mathfrak{N}$, and $\dot{h}$ is a $\rho$-ary function symbol in the signature.  Note that $\dot{h} \in \mathfrak{N}$ because the signature is contained (as a subset) in $\mathfrak{N}$, and the sequence $\boldsymbol{z}$ is an element of $\mathfrak{N}$ by the $\mu$-closure of $\mathfrak{N}$.  It follows by $\Sigma_1$-elementarity of $\mathfrak{N}$ in the universe, and $\kappa$-directedness of $I$, that there is some $k^* \in I \cap \mathfrak{N}$ and some sequence $\boldsymbol{z^*} = \langle z^*_\xi \ | \ \xi < \rho \rangle$ of elements of $P_{k^*}$ such that $\pi_{k^*,\infty}(z^*_\xi) = z_\xi$ for each $\xi < \rho$, the entire sequence $\boldsymbol{z}^*$ is an element of $\mathfrak{N}$, and each $z^*_\xi$ is an element of $\mathfrak{N}$.  It follows that $e(z_\xi) = \pi_{k^*,i^*}(z^*_\xi)$ for each $\xi$, and that $h^{P_{k^*}}\big(  \boldsymbol{z^*} \big)$ is an element of $\mathfrak{N}$.  Then
\[
e\Big( h^{M_\infty}(\boldsymbol{z}) \Big) =  e \Big( \pi_{k^*,\infty} \big(  \underbrace{h^{P_{k^*}}(\boldsymbol{z^*})}_{\in \mathfrak{N} \cap P_{k^*}}\big) \Big) = \pi_{k^*,i^*} \big( h^{P_{k^*}}(\boldsymbol{z^*}) \big) = h^{P_{i^*}}\big( \langle \underbrace{\pi_{k^*,i^*}(z^*_\xi)}_{e(z_\xi)} \ | \ \xi < \rho \rangle \big),
\]
where the second equality is by the definition of $e$.

%That $e$ is a morphism follows from the assumption that $\mathfrak{N}$ is closed under $\mu$ sequences, is a $\Sigma_1$-elementary substructure of the universe, and contains (as a subset) the signature of the models. 

  Consider the diagram:
\begin{equation*}
\xymatrix{
  M_\infty \restriction \mathfrak{N}  \ar[r]^-e & P_{i^*} \ar[r]^-{\pi_{i^*,\infty}} & M_\infty \ar[d]^-g \\
A \ar[rr]_-f  & & B  
}
\end{equation*}
Since $f \in \cs$ and $P_{i^*} \in \cs\text{-Proj}$, there exists a morphism $\tau: P_{i^*} \to A$ such that
\[
f \circ \tau = g \circ \pi_{i^*,\infty}.
\]
Set $\tau':= \tau \circ e$, which yields the following commutative diagram:
\begin{equation}\label{eq_FinalDiag}
\xymatrix{
 M_\infty \restriction \mathfrak{N} \ar[d]_-{\tau'} \ar[r]^-e & P_{i^*}  \ar[r]^-{\pi_{i^*,\infty}} & M_\infty \ar[d]^-g \\
A \ar[rr]_-f  & & B  
}
\end{equation}

By \eqref{eq_Inclusion}, the top row of diagram \eqref{eq_FinalDiag} is just the inclusion map from $M_\infty \restriction \mathfrak{N}$ into $M_\infty$. Hence, $\tau'$ is a completion of the diagram \eqref{eq_MainDiag}, yielding a contradiction.

\end{document}